\newtheorem{theorem}{Theorem}
\newcommand{\R}{\mathbb{R}}
\title{Cutting along a symmetric quadrilateral to construct an embedded flexible dodecahedron}
\author{Elvar Atlason}
\date{October 2025}
\begin{document}

\maketitle

\abstract{Until recently, the simplest known flexible polyhedron was Steffen's polyhedron on nine vertices. However, in 2024, an embedded flexible polyhedron on eight vertices was announced. It attains the known lower bound for the number of vertices, showing that the simplest embedded flexible polyhedron has eight vertices.

We introduce a method for making new flexible polyhedral surfaces from old ones. This general method applies to the above minimal example, giving another proof of its flexibility. We also construct a different flexible dodecahedron on eight vertices. This improves both the range of motion and the simplicity of the exposition.}

\tableofcontents

\section*{Introduction}

Polyhedra are generically rigid, as shown in \cite{Gluck_genericity}. Embedded polyhedra were conjectured to be rigid, but this was disproved by Robert Connelly in 1977, \cite{Connelly_original}. Since then, some examples of embedded flexible polyhedra have been studied. The simplest known was Steffen's Polyhedron on nine vertices, and this was believed to be optimal, \cite{Mak95}. In the paper \cite{Minimal_example} from 2024, an embedded flexible polyhedron on eight vertices was announced. It attains the known lower bound for the number of vertices of a flexible polyhedron, showing that the simplest embedded flexible polyhedron has eight vertices.

Inspired by that paper and the work of G. Nelson in \cite{Nelson_pentagons}, we introduce a new method for constructing flexible polyhedra. The method of cutting along a quadrilateral, followed by a twist or reflection, is suitable for making new flexible polyhedral surfaces from old ones.

The minimal polyhedron announced in \cite{Minimal_example} is an example of our method, as it arises by cutting and twisting a Bricard type I octahedron. This explanation of the construction simplifies their proof of flexibility. By following their strategy, but instead cutting and reflecting along a quadrilateral on a flexible octahedron, we can derive an embedded flexible dodecahedron with a significantly larger range of motion. In section \ref{S2} we give an example of parameters for a good working model without self-intersections. A net can be seen in figure \ref{net_dodecahedron}, and a model at \url{https://www.geogebra.org/m/pb4nqczx}.

This note has 3 sections. In section \ref{S1} we prove the geometric lemmata about symmetric quadrilaterals required for our construction. Section \ref{S2} explains the method of cutting along a symmetric quadrilateral, followed by a specific example of cutting and reflecting to find a flexible dodecahedron. Figure \ref{pentagonal_bipyramids} explains how the construction works. In section \ref{S3} we discuss the steps required to see that eight vertices are minimal.

\section{Symmetric quadrilaterals in $\R^3$}
\label{S1}

The paper \cite{twinning_paper} shows how to construct an infinite family of flexible polyhedra using a method called twinning. The basis of the construction is the following two theorems, explaining the motion of symmetric quadrilaterals in $\R^3$. We will make use of these theorems to create new flexible polyhedra from old by cutting them along symmetric quadrilaterals. Below, we reproduce the proofs.

Theorem \ref{line_symmetry} is used in describing the Bricard I octahedron and for the type I twinning. Theorem \ref{plane_symmetry} desribes the Bricard II octahedron and type II twinning.

A quadrilateral flexing in three dimensions has $3\cdot 4 - 4 = 8$ degrees of freedom. Disregarding the six degrees of freedom coming from Euclidean transformations, that leaves two degrees of freedom. So we can regard the space of possible positions as a two dimensional surface. The following theorem states that a rotationally symmetric quadrilateral will remain rotationally symmetric throughout any flexing motion.

\begin{theorem}
\label{line_symmetry}
Let $A$, $B$, $A'$, $B'$ be four points in $\mathbb{R}^3$ such that $AB = A'B'$ and $AB' = A'B$. Then there exists a line $l$ in $\mathbb{R}^3$ such that a half-rotation in $l$ swaps $A$ with $A'$, and $B$ with $B'$.
\end{theorem}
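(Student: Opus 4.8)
The plan is to reduce the statement to a pair of perpendicularity conditions and then read off the axis $l$ directly from the midpoints of the two diagonals. First I would recall the geometric meaning of a half-rotation: a $180^\circ$ rotation about a line $l$ interchanges two points $P$ and $P'$ precisely when the midpoint $\tfrac12(P+P')$ lies on $l$ and the segment $PP'$ is orthogonal to $l$. Hence, writing $M_A = \tfrac12(A+A')$ and $M_B = \tfrac12(B+B')$, the theorem amounts to producing a single line $l$ that passes through both $M_A$ and $M_B$ and is orthogonal to both $A-A'$ and $B-B'$.

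The natural candidate is the line through the two midpoints, so the crux is to show that the segments $AA'$ and $BB'$ are each orthogonal to the direction $M_A - M_B$. This is where the hypotheses $AB=A'B'$ and $AB'=A'B$ enter. I would expand the two squared-length identities $|A-B|^2 = |A'-B'|^2$ and $|A-B'|^2 = |A'-B|^2$ and combine them. Subtracting one from the other and using $|B|^2 - |B'|^2 = (B-B')\cdot(B+B')$ collapses the expression to $(B-B')\cdot\bigl[(B+B') - (A+A')\bigr] = 0$, that is $(B-B')\cdot(M_B - M_A) = 0$. Adding the two identities instead yields, symmetrically, $(A-A')\cdot(M_A - M_B)=0$. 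So both diagonals are orthogonal to the midpoint connector, exactly as required.

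With these two identities in hand the conclusion is almost immediate when $M_A \neq M_B$: take $l$ to be the line through $M_A$ and $M_B$; it contains both midpoints and, by the computation above, is orthogonal to $AA'$ and $BB'$, so the half-rotation about $l$ performs the desired swap. The step I expect to be the main obstacle is the degenerate case $M_A = M_B$, where the midpoint connector vanishes and the two identities become vacuous. Here I would instead let $l$ be any line through the common midpoint $M$ whose direction is orthogonal to both $A-A'$ and $B-B'$; such a direction exists because two vectors in $\mathbb{R}^3$ span a subspace of dimension at most two, leaving a nonzero orthogonal complement. One should also note that the mildest degeneracies ($A=A'$ or $B=B'$, where a perpendicularity condition is vacuous) are covered automatically, since then only the midpoint condition must be enforced. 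I would close by verifying directly from the midpoint-plus-orthogonality characterization that the chosen $l$ indeed sends $A\mapsto A'$ and $B\mapsto B'$.
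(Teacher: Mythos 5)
Your proposal is correct, and it takes a genuinely different route from the paper. The paper argues synthetically: it splits into the case where the diagonals $AA'$ and $BB'$ intersect (then the quadrilateral is planar with opposite sides equal, hence a parallelogram, and the axis is the perpendicular to its plane through the centre) and the case where they do not, and in the latter it shows the line through the midpoints $X$, $Y$ of the diagonals is the axis via congruent triangles: $\triangle ABB' \cong \triangle A'B'B$ by SSS gives $AY = A'Y$ as corresponding medians, then $\triangle AXY \cong \triangle A'XY$ forces $\angle AXY = \angle A'XY = 90^\circ$, i.e.\ $XY \perp AA'$, and symmetrically $XY \perp BB'$. You reach the same axis by pure vector algebra: subtracting and adding the expanded identities $|A-B|^2 = |A'-B'|^2$ and $|A-B'|^2 = |A'-B|^2$ yields
\begin{align*}
(B-B')\cdot(M_B - M_A) &= 0,\\
(A-A')\cdot(M_A - M_B) &= 0,
\end{align*}
which, together with the midpoint-plus-orthogonality characterisation of a half-turn, settles the generic case at once. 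Your approach buys robustness: the two dot-product identities hold unconditionally, so every degenerate configuration (planar parallelogram, crossed antiparallelogram, coincident points) is funnelled into the single clean dichotomy $M_A \neq M_B$ versus $M_A = M_B$, and the latter is dispatched by picking any direction orthogonal to the span of $A-A'$ and $B-B'$. By contrast, the paper's case split requires the small geometric facts that intersecting diagonals force a parallelogram and that non-intersecting diagonals force $X \neq Y$, and its triangle argument tacitly assumes non-degeneracy. What the paper's proof buys in exchange is geometric transparency: it exhibits the symmetry visually and stays within the elementary synthetic toolkit used elsewhere in the text (Theorem \ref{plane_symmetry} is proved in the same style).
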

\begin{proof}  In the case when the diagonals $AA'$ and $BB'$ intersect, the quadrilateral $ABA'B'$ is planar, so a parallelogram. Therefore it has a rotational symmetry in a line perpendicular to the plane of the four points.

Assuming now the diagonals do not intersect, let $X$ be the midpoint of $AA'$ and $Y$ be the midpoint of $BB'$, as shown in figure \ref{theorem1_fig}. By assumption, these are different points, and so we let $l$ be the line through $X$ and $Y$.

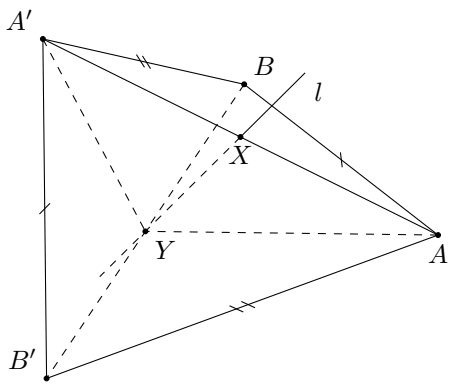
\begin{figure}\centering
\begin{tikzpicture}[line join=bevel,scale = 1.3,tick/.style={
    postaction={decorate},
    decoration={
      markings,
      mark=at position 0.5 with {
        \draw[-] (-2pt,2pt) -- (2pt,-2pt);
      }
    }
  }, doubletick/.style={
    postaction={decorate},
    decoration={
      markings,
      mark=at position 0.485 with {
        \draw[-] (-2pt,2pt) -- (2pt,-2pt);
      },  mark=at position 0.515 with {
        \draw[-] (-2pt,2pt) -- (2pt,-2pt);
      }
    }
  }]
\coordinate (B) at (1,1.5,1.5);
\coordinate (A) at (2,-1,-1);
\coordinate(B') at (-1,-1.5,1.5);
\coordinate (A') at (-2,1,-1);
\coordinate(Y) at (0,0,1.5);
\coordinate(X) at (0,0,-1);
\coordinate(X') at (0,0,2.7);
\coordinate(Y') at (0,0,-2.7);
\draw[tick](A)--(B);
\draw[doubletick](A')--(B);
\draw[tick](A')--(B');
\draw[doubletick](B')--(A);
\draw[dashed](B)--(B');
\draw(A)--(A');
\draw[dashed](A')--(Y);
\draw[dashed](A)--(Y);
\draw[fill=black] (A) circle [radius=0.7pt] node[below] {$A$};
\draw[fill=black] (B) circle [radius=0.7pt] node[above right] {$B$};
\draw[fill=black] (A') circle [radius=0.7pt] node[above left] {$A'$};
\draw[fill=black] (B') circle [radius=0.7pt] node[above left] {$B'$};
\draw[fill=black] (X) circle [radius=0.7pt] node[below] {$X$};
\draw[fill=black] (Y) circle [radius=0.7pt] node[below right] {$Y$};
\draw[dashed] (X)--(X');
\draw(X)--(Y');
\draw (Y') node[ below right]{$l$};
\end{tikzpicture}
\caption{The quadrilateral $ABA'B'$ has rotational symmetry.}
\label{theorem1_fig}
\end{figure}

Since $\triangle{ABB'} \cong \triangle{BA'B'}$, and $Y$ is the midpoint of $BB'$, we have $$AY = A'Y,$$ since they are corresponding medians in congruent triangles.  Therefore, we have $\triangle{AXY} \cong \triangle{A'XY}$, since the corresponding sides in these triangles are equal. In particular, $$\angle{AXY} = \angle{A'XY},$$ and since $A',X,A$ lie on a line, we have $XY \perp AA'.$

Similarly, we obtain $XY \perp BB',$ and so it follows that $l$ is the line of symmetry of the quadrilateral $ABA'B'$.
\end{proof}

The second theorem shows how reflectional symmetry is maintained in a flexing quadrilateral.

\begin{theorem}
\label{plane_symmetry}
Let $A$, $B$, $A'$, $B'$ be four points in $\mathbb{R}^3$ such that $AB = AB'$ and $A'B = A'B'$. Then there exists a plane $\pi$ in $\mathbb{R}^3$ through $A$ and $A'$, such that reflecting in $\pi$ swaps $B$ and $B'$.
\end{theorem}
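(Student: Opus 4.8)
The plan is to exhibit $\pi$ directly as the perpendicular bisector plane of the segment $BB'$, mirroring the way the axis in Theorem \ref{line_symmetry} arose from the midpoints of the diagonals. Recall that for two distinct points $B$ and $B'$ the locus of points in $\R^3$ equidistant from $B$ and $B'$ is exactly the plane through the midpoint of $BB'$ perpendicular to $BB'$, and that this is precisely the mirror plane of the reflection interchanging $B$ and $B'$.

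First I would assume $B \neq B'$ and let $\pi$ be this perpendicular bisector plane of $BB'$. The hypothesis $AB = AB'$ says exactly that $A$ is equidistant from $B$ and $B'$, so $A \in \pi$; likewise $A'B = A'B'$ gives $A' \in \pi$. Hence $\pi$ passes through both $A$ and $A'$, as required. Since $\pi$ is by construction the perpendicular bisector plane of $BB'$, reflection in $\pi$ carries $B$ to $B'$ and $B'$ to $B$, which is the desired swap. This settles the generic case, and it is noticeably shorter than the argument for Theorem \ref{line_symmetry}, because here the equidistance locus is already a plane (codimension one) rather than an object whose perpendicularity must be verified by a congruent-triangles computation.

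The only configuration not covered is the degenerate one $B = B'$, in which the perpendicular bisector plane is undefined. Here swapping $B$ with $B'$ simply means fixing the single point $B$, so I would instead take $\pi$ to be any plane containing the three points $A$, $A'$, $B$; such a plane always exists (uniquely when they are non-collinear, with a one-parameter freedom when they are collinear), it passes through $A$ and $A'$, and its reflection fixes $B = B'$.

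I anticipate no genuine obstacle. The entire content is the observation that the two distance hypotheses force $A$ and $A'$ onto the perpendicular bisector plane of $BB'$, and that this plane is exactly the mirror realizing the swap. The only points demanding mild care are isolating the degenerate case $B = B'$ and noting that the statement asks only that $\pi$ \emph{contain} $A$ and $A'$ rather than be spanned by them, so no non-degeneracy of $A,A'$ relative to the line $BB'$ is needed.
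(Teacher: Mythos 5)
Your proof is correct, but it runs in the opposite direction from the paper's and is genuinely different in structure. The paper builds $\pi$ as the plane through $A$, $A'$, and the midpoint $M$ of $BB'$, and then \emph{proves} $BB' \perp \pi$ by an isosceles-triangle angle computation ($\angle A'MB = \angle B'MA' = 90^{\circ}$, etc.); because three points only determine a plane when non-collinear, the paper must split off the coplanar (kite) case separately. You instead start from the perpendicular bisector plane of $BB'$ and observe that the hypotheses $AB = AB'$ and $A'B = A'B'$ are precisely the statement that $A$ and $A'$ lie on it, with the swap of $B$ and $B'$ holding by definition of that plane. This is shorter and needs no coplanar/non-coplanar case distinction --- the only exceptional case is the truly degenerate $B = B'$, which you dispatch correctly and which the paper glosses over. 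The trade-off is that your argument outsources the geometric content to the standard fact that the equidistance locus of two points is the perpendicular bisector plane and is the mirror of the reflection exchanging them; the proof of that fact is essentially the isosceles-triangle computation the paper performs inline, so the paper's version is self-contained and stylistically parallel to its proof of Theorem \ref{line_symmetry}, while yours is the cleaner citation-level argument.
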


\begin{proof}
If the four points are coplanar, the quadrilateral $ABA'B'$ is a kite, and so a plane perpendicular to the plane of the quadrilateral will work.
\begin{figure}\centering
\begin{tikzpicture}[line join=bevel,scale = 1.5,tick/.style={
    postaction={decorate},
    decoration={
      markings,
      mark=at position 0.49 with {
        \draw[-] (-2.1pt,2.1pt) -- (-2.1pt,-2.1pt);
      }
    }
  }, doubletick/.style={
    postaction={decorate},
    decoration={
      markings,
      mark=at position 0.505 with {
        \draw[-] (-2.1pt,2.1pt) -- (-2.1pt,-2.1pt);
      },  mark=at position 0.54 with {
        \draw[-] (-2.1pt,2.1pt) -- (-2.1pt,-2.1pt);
      }
    }
  }]
\coordinate (B) at (1.8,-1.5,1.5);
\coordinate (A) at (2,0,0);
\coordinate(B') at (1.8,-1.5,-1.5);
\coordinate (A') at (-2,0,0);
\coordinate(M) at (1.8,-1.5,0);
\coordinate(X) at (3,0.8,0);
\coordinate(Y) at (-3,0.8,0);
\coordinate(X') at (-3,-2.6,0);
\coordinate(Y') at (3,-2.6,0);
\draw[dashed](B')--(M);
\draw(A)--(A');
\draw[dashed,tick](A')--(B');
\draw[dashed, doubletick](A)--(B');
\draw(M)--(B);
\draw[tick](A')--(B);
\draw[doubletick](A)--(B);
\draw(A')--(M);
\draw(A)--(M);
\draw[fill opacity = 0.05, fill = black, dotted] (X)--(Y)--(X')--(Y')--cycle;
\draw[fill=black] (A) circle [radius=0.5pt] node[above right] {$A$};
\draw[fill=black] (B) circle [radius=0.5pt] node[below right] {$B$};
\draw[fill=black] (A') circle [radius=0.5pt] node[above left] {$A'$};
\draw[fill=black] (B') circle [radius=0.5pt] node[below right] {$B'$};
\draw[fill=black] (M) circle [radius=0.5pt] node[below right] {$M$};
\end{tikzpicture}
\caption{The quadrilateral $ABA'B'$ has reflective symmetry. $B'$ lies behind the gray plane $\pi$.}
\label{theorem2_fig}
\end{figure}

In the other case, refer to figure \ref{theorem2_fig}.  Let $M$ be the midpoint of $BB'$. Since the quadrilateral $ABA'B'$ is non-planar, the points $A$,$A'$, and $M$ do not lie on one line, so we let $\pi$ be the plane through $A$, $A'$, and $M$. The $\triangle{BA'B'}$ is isosceles, so $$\angle{A'MB} = \angle{B'MA'} = 90^{\circ}.$$ Since $\triangle{BAB'}$ is isosceles, we also have $$\angle{AMB} = \angle{B'MA} = 90^{\circ}.$$ 
This shows $BB' \perp \pi$, and the mirror symmetry in the plane $\pi$ follows as $M$ is the midpoint of $BB'$.
\end{proof}

In the next section, we will see how these two theorems can be used to cut up and glue flexible polyhedral surfaces to obtain new models with the same number of degrees of freedom.

\section{Cutting along a quadrilateral}
\label{S2}

\subsection{General description}

 We describe a generalised version of a method outlined by G. Nelson in \cite{Nelson_pentagons}\footnote{Gerald Nelson was a retired software engineer from Minnesota who took an interest in flexible polyhedra late in life. His contributions are on arXiv, and are worth a read.}. This is the main inspiration used in \cite{Minimal_example} to create a new minimal example of a flexible polyhedron. We call the method \textit{cutting along a quadrilateral, followed by a twist or reflection}. The following description is simplified and more general than the constructions that have previously appeared. In particular, the notion of arriving at these models by cutting up a known model is new. Also, Nelson only discusses the specific case starting from an octahedron, and does not consider the possibility of cutting and reflecting, which is what we will use to construct an embedded dodecahedron with a large range of motion in the next subsection.

Given a flexible polyhedron $P$, let $A,B,A',B'$ be some points on the edges of $P$, such that the edges $AB$, $BA'$, $A'B'$ and $B'A$ all lie on the faces of $P$, and such that $AB = A'B'$ and $A'B = AB'$. We can then cut $P$ along the quadrilateral $ABA'B'$ to obtain two caps on a quadrilateral hole, and these can flex individually in the same way as when $P$ flexed. The shared quadrilateral base $ABA'B'$ is such that opposite edges are of the same length, and so we can apply theorem \ref{line_symmetry} to find its line of symmetry, $l$. We turn one of the pieces by $180^{\circ}$ in the line $l$, and glue the two pieces back together along $ABA'B'$, to obtain a new flexible polyhedron $P^{\text{I}}_{ABA'B'}$. We call this the \textit{cut and twisted polyhedron of $P$, along the quadrilateral $ABA'B'$}. It will be flexible, since $P$ is flexible.

A \textit{cut and reflected polyhedron} can be defined in an analogous manner, for  $A,B,A',B'$ some points on the edges of $P$, such that the edges $AB$, $BA'$, $A'B'$ and $B'A$ all lie on the faces of $P$, and such that $AB = AB'$ and $AB' = A'B'$. The quadrilateral $ABA'B'$ has a plane of symmetry $\pi$ by theorem \ref{plane_symmetry}. We cut off a piece along $ABA'B'$, reflect it in $\pi$, and glue the two boundaries back together. In this way, we construct a new polyhedron $P^{\text{II}}_{ABA'B'}$. This polyhedron has the same freedom to flex as $P$. See figure \ref{pentagonal_bipyramids} for a diagram depicting this when $P$ is a Bricard octahedron.

Let us summarise this in a theorem. Note that $P$ can be any polyhedral surface, so long as the quadrilateral $ABA'B'$ splits it into two pieces.

\begin{theorem}
\label{cutting_thm}
Let $P$ be a triangulated surface, with $A,B,A',B'$ points on the edges of $P$, such that the quadrilateral $ABA'B'$ lies on the surface of $P$, and splits it into two parts. If $ABA'B'$ has a rotational (respectively reflectional) symmetry, then the surface $P^{\text{I}}_{ABA'B'}$ (respectively $P^{\text{II}}_{ABA'B'}$) has the same number of degrees of freedom as $P$.
\end{theorem}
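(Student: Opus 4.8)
The plan is to construct an explicit, Euclidean-equivariant bijection between the configuration space of $P$ and that of $P^{\mathrm{I}}$ (respectively $P^{\mathrm{II}}$), and to read off equality of dimensions; by the \emph{number of degrees of freedom} I mean the dimension of the configuration space modulo the group of rigid motions of $\R^3$. Cutting along $ABA'B'$ separates $P$ into two caps $C_1$ and $C_2$ sharing exactly the boundary quadrilateral, and since the four sides $AB,BA',A'B',B'A$ lie in rigid faces, each of their lengths is constant in every placement. Writing $\mathcal{C}_i$ for the space of placements of the vertices of $C_i$ preserving all of its edge lengths, a placement of $P$ is a pair $(c_1,c_2)\in\mathcal{C}_1\times\mathcal{C}_2$ whose restrictions agree at the corners $A,B,A',B'$, whereas a placement of $P^{\mathrm{I}}$ is a pair agreeing \emph{after the twist}, i.e.\ with the corner $A$ of $C_1$ identified with the corner $A'$ of $C_2$ and $B$ with $B'$. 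Thus $P$ and $P^{\mathrm{I}}$ are assembled from the \emph{same} two caps and differ only in how their common boundary is identified.

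First I would record the identity that drives the whole argument. In any placement the four corners form a spatial quadrilateral with $AB=A'B'$ and $AB'=A'B$ — equalities that hold in \emph{every} placement, since each side length is individually constant and the two equalities hold at the start — so Theorem \ref{line_symmetry} supplies the half-turn $\rho$ about the symmetry line $l$ that interchanges the positions of $A,A'$ and of $B,B'$. The key observation is that applying the isometry $\rho$ to the labelled quadrilateral has the same effect as relabelling $A\leftrightarrow A'$, $B\leftrightarrow B'$. I would then set
\[
\Phi(c_1,c_2)=(c_1,\ \rho\circ c_2),
\]
where $\rho$ is the half-turn of the quadrilateral cut out by the shared corners. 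Since $\rho$ is an isometry, $\rho\circ c_2\in\mathcal{C}_2$; and because $c_1,c_2$ agree at the corners while $\rho$ swaps them, a one-line check gives $(\rho\circ c_2)(A')=\rho(c_1(A'))=c_1(A)$, and likewise at the other three corners, so $\Phi(c_1,c_2)$ meets the twisted identification and is a placement of $P^{\mathrm{I}}$.

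Next I would show $\Phi$ is a bijection by checking it is an involution. The twisted placement $(c_1,\rho\circ c_2)$ has the same four corner points as $(c_1,c_2)$, hence the same symmetry line $l$ and the same $\rho$; as $\rho^2=\mathrm{id}$, the identical construction applied to it returns $(c_1,c_2)$, so $\Phi$ and this reverse map are mutually inverse. Finally, for a rigid motion $g$ one has $g\,\rho\,g^{-1}=\rho_{g(l)}$, whence $\Phi(g\cdot c_1,g\cdot c_2)=g\cdot\Phi(c_1,c_2)$; thus $\Phi$ is equivariant and descends to a bijection of the quotients by $\mathrm{Isom}(\R^3)$. Because $\rho$ depends algebraically on the corner positions (through the midpoints of the diagonals, as in the proof of Theorem \ref{line_symmetry}), $\Phi$ is a diffeomorphism on the open locus of nondegenerate quadrilaterals and continuous throughout, so the two quotient configuration spaces have equal dimension, i.e.\ $P$ and $P^{\mathrm{I}}$ have the same number of degrees of freedom. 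The reflected case is identical: replace $\rho$ by the reflection $\sigma$ in the plane $\pi$ of Theorem \ref{plane_symmetry} (which fixes $A,A'$ and swaps $B,B'$) and use $\sigma^2=\mathrm{id}$ together with $g\,\sigma\,g^{-1}=\sigma_{g(\pi)}$.

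\textbf{Main obstacle.} The delicate point is not the algebra of $\Phi$ but the setup that makes it meaningful: one must know the symmetry is present in \emph{every} configuration, not just the initial one — which is exactly what Theorems \ref{line_symmetry}--\ref{plane_symmetry} give, their hypotheses being the side-length equalities, which persist under any flex. A secondary issue is to justify identifying $\mathcal{C}(P)$ and $\mathcal{C}(P^{\mathrm{I}})$ with placements of the common pair of caps without introducing spurious freedom: one must verify that cutting along the quadrilateral frees the caps' shared boundary but nothing else, so that the two surfaces genuinely differ only in the boundary identification, and that $\Phi$ stays a homeomorphism as the quadrilateral degenerates (coplanar, or diagonals meeting), where $l$ and $\pi$ are given by the limiting recipes in the proofs above.
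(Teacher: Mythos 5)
Your proposal is correct and takes essentially the same approach as the paper: the paper justifies Theorem \ref{cutting_thm} only through the informal discussion preceding it (cut $P$ into two caps along $ABA'B'$, invoke Theorems \ref{line_symmetry} and \ref{plane_symmetry} to get the symmetry of the quadrilateral in every flexed position, apply that symmetry to one cap and reglue), and your configuration-space bijection $\Phi(c_1,c_2)=(c_1,\rho\circ c_2)$ is precisely a rigorous formalization of that argument. If anything, your write-up is more careful than the paper's, notably in making the equivariance under rigid motions explicit and in flagging the degenerate (planar) configurations where the symmetry line or plane must be obtained by the limiting recipes of Theorems \ref{line_symmetry} and \ref{plane_symmetry}.
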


We will show below how such a cut and reflection, applied to a flexible octahedron, can be used to create flexible pentagonal bipyramids. By a suitable choice of parameters, and the addition of a tent on one of the faces, this leads to a new minimal example of a flexible polyhedron, a dodecahedron on 8 vertices.

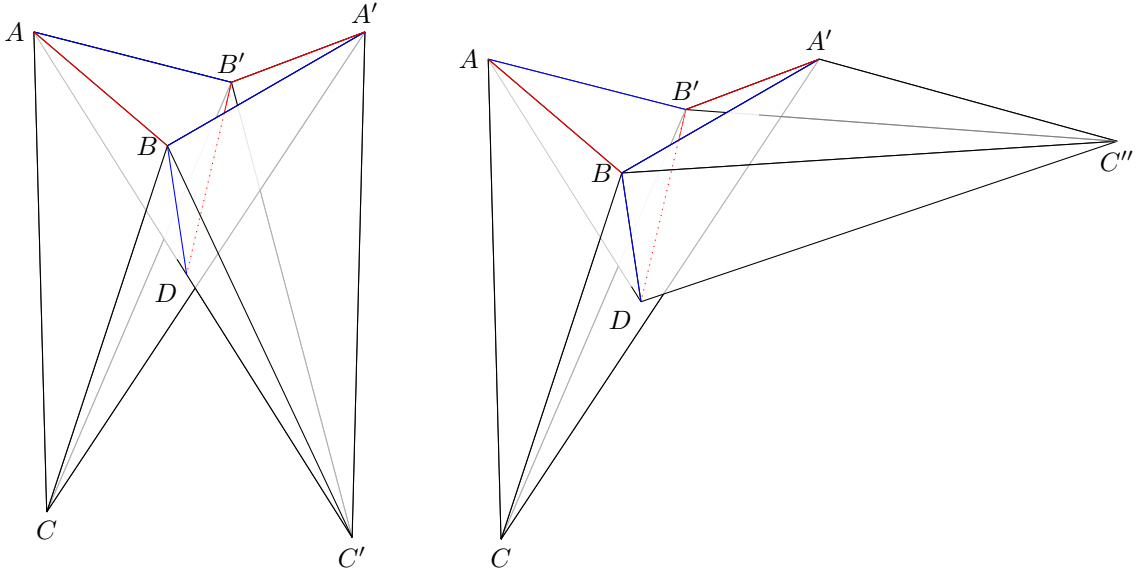
\begin{figure}
\centering
\subfloat{

\begin{tikzpicture}[line join=bevel,z=-5.5,scale = 0.49\textwidth/3.6cm]
\coordinate (A1) at (0,0,-1);
\coordinate (A2) at (-1,0.5,0);
\coordinate (A3) at (0,0,1);
\coordinate (A4) at (1,0.5,0);
\coordinate (B1) at (-1,-2.5,-0.4);
\coordinate (C1) at (1,-2.5,0.4);
\coordinate (M1) at ($(A2)!0.45!(C1)$);
\coordinate (E) at ($(A2)!0.48!(C1)$);

\draw [fill opacity=0.7,fill=white] (A1) -- (A2) -- (B1) -- cycle;
\draw [fill opacity=0.7,fill=white](A4) -- (A1) -- (C1) -- cycle;
\draw [fill opacity=0.7,fill=white] (A4) -- (A1) -- (B1) -- cycle;
\draw [fill opacity=0.7,fill=white] (A1) -- (A2) -- (C1) -- cycle;
\draw (M1)--(A2);
\draw [red] (A1)--(E);
\draw  [fill opacity=0.7,fill=white] (A3) -- (A4) -- (B1) -- cycle;
\draw  [fill opacity=0.7,fill=white, draw = none] (A2) -- (A3) -- (C1) -- cycle;
\draw  [fill opacity=0.7,fill=white] (A2) -- (A3) -- (B1) -- cycle;
\draw  [fill opacity=0.7,fill=white](A3) -- (A4) -- (C1) -- cycle;
\draw (M1) -- (C1);
\draw [blue] (A3)-- (E);
\draw [red, thin,dotted] (A1)-- (E);
\draw[blue] (A1)--(A2);
\draw[red] (A3)--(A2);
\draw[blue] (A3)--(A4);
\draw[red] (A4)--(A1);
\node[below left] at (E) {$D$};
\node[above] at (A1) {$B'$};
\node[left] at (A2) {$A$};
\node[left] at (A3) {$B$};
\node[above] at (A4) {$A'$};
\node[below] at (C1) {$C'$};
\node[below] at (B1) {$C$};
\end{tikzpicture}}
\qquad
\subfloat{
\begin{tikzpicture}[line join=bevel,z=-5.5,scale = 0.49\textwidth/3.6cm]
\coordinate (A1) at (0,0,-1);
\coordinate (A2) at (-1,0.5,0);
\coordinate (A3) at (0,0,1);
\coordinate (A4) at (1,0.5,0);
\coordinate (B1) at (-1,-2.5,-0.4);
\coordinate (C1) at (1,-2.5,0.4);
\coordinate (C1') at (2.8,0,0);
\coordinate (M1) at ($(A2)!0.45!(C1)$);
\coordinate (M3) at ($(A1)!0.17!(C1')$);
\coordinate (E) at ($(A2)!0.48!(C1)$);

\draw [fill opacity=0.7,fill=white] (A1) -- (A2) -- (B1) -- cycle;
\draw [fill opacity=0.7,fill=white] (A4) -- (A1) -- (B1) -- cycle;
\draw  [fill opacity=0.7,fill=white, draw = none](A1) -- (E) -- (C1') -- cycle;
\draw [red] (A1)--(E);

\draw [fill opacity=0.7,fill=white, draw = none] (A1) -- (A2) -- (E) -- cycle;

\draw [fill opacity=0.7,fill=white](A4) -- (A1) -- (C1') -- cycle;
\draw (M1)--(A2);
\draw  [fill opacity=0.7,fill=white] (A3) -- (A4) -- (B1) -- cycle;
\draw  [fill opacity=0.7,fill=white, draw = none] (A2) -- (A3) -- (E) -- cycle;
\draw  [fill opacity=0.7,fill=white] (A2) -- (A3) -- (B1) -- cycle;
\draw  [fill opacity=0.7,fill=white](A3) -- (A4) -- (C1') -- cycle;
\draw  [fill opacity=0.7,fill=white](A3) -- (E) -- (C1') -- cycle;
\draw[gray] (M3)--(C1');
\draw (M1) -- (E);
\draw [blue] (A3)-- (E);
\draw[blue] (A1)--(A2);
\draw[red] (A3)--(A2);
\draw[blue] (A3)--(A4);
\draw[red] (A4)--(A1);
\draw [red, thin,dotted] (A1)-- (E);
\node[below left] at (E) {$D$};
\node[above] at (A1) {$B'$};
\node[left] at (A2) {$A$};
\node[left] at (A3) {$B$};
\node[above] at (A4) {$A'$};
\node[below] at (C1') {$C''$};
\node[below] at (B1) {$C$};
\end{tikzpicture}}
\caption{Cutting and reflecting a Bricard type I octahedron along the quadrilateral $DB'A'B$. The red and blue colours indicate segments of equal lengths, $AB' = A'B = BD$ and $AB = B'A' = B'D$.}
\label{pentagonal_bipyramids}
\end{figure}

\subsection{A flexible dodecahedron}

We proceed to show by example a simple case of cutting along a quadrilateral and reflecting. From a flexible octahedron, we obtain a flexible bipyramid. With a suitable choice of edge lengths, and the addition of a tent, this can be turned into an embedded flexible dodecahedron.

Start with a flexible octahedron of type I as in the left part of figure \ref{pentagonal_bipyramids}. The vertices are $A$, $B$, $A'$, $B'$, $C$, and $C'$, such that $AB = A'B'$, and $A'B = AB'$, and $C'$ is obtained from $C$ by reflecting in the line of symmetry of the quadrilateral $ABA'B'$. The flexibility of the octahedron follows from the rotational symmetry of $ABA'B'$, given by theorem \ref{line_symmetry}. Assume that there is a point $D$ on the edge $AC'$ such that $DB = BA'$ and $DB' = B'A'$. By theorem \ref{plane_symmetry}, the quadrilateral $DB'A'B$ has a plane of reflective symmetry. As indicated in the right part of figure \ref{pentagonal_bipyramids}, we cut the polyhedron along $DB'A'B$, and reflect the point $C'$ in the plane of symmetry of $DB'A'B$ into $C''$. We remove the point $C'$ and instead connect the edges $DC''$, $BC''$, $A'C''$, $B'C''$. If the initial Bricard octahedron is denoted by $P$, then our new polyhedron is $P_{BDB'A'}^{II}$. By theorem \ref{cutting_thm}, this is flexible just like the octahedron $P$. We have added a single vertex to $P$ at the point $D$, and so the result is a polyhedron on seven vertices. The polyhedron $P_{BDB'A'}^{II}$ has ten faces, and so is a decahedron. Its topology can be recognised as that of a pentagonal bipyramid, where the two cones are erected on the pentagon $ACA'C''D$, with apices at $B$ and $B'$.

The polyhedron $P_{BDB'A'}^{II}$ still has self-intersection. However, it is not too difficult to find parameters such that all of the self-intersections involve one of the faces. These self-intersections can then be eliminated by erecting a tent onto that face. By adding a tent, we add a single vertex and two faces, and so the resulting polyhedron is a dodecahedron on eight vertices. Thus, we have arrived at a new example of a flexible polyhedron on eight vertices. We will discuss in section \ref{S3} that eight is the smallest number of vertices possible.

A net for a particular realisation of this polyhedron, attaining a large range of motion, is shown in figure \ref{net_dodecahedron}. We chose the values $l_1,l_2,l_3,l_4,l_5$ by eye using sliders in Geogebra, thus determining $x$ and $y$. Here, we have set $$l_1 = 3.6 \text{, } l_2 = 3.9 \text{, } l_3 = 1\text{, } l_4 = 3.9 \text{, and }l_5 = 2.9.$$ By an application of the law of cosines, we can compute $x$ and $y$ as $$ x = \sqrt{l_2^2+(l_3+l_4)^2 - \dfrac{l_3+l_4}{l_3}(l_2^2+l_3^2-l_1^2)} =\dfrac{\sqrt{9318}}{20} \approx 4.83,$$ and  $$y = \sqrt{l_1^2+(l_3+l_4)^2 - \dfrac{l_3+l_4}{l_3}(l_1^2+l_3^2-l_2^2)} = \dfrac{13 \sqrt{102}}{20} \approx 6.56.$$ The altitudes in the tetrahedral tent are chosen as $h_1 = 6.5$, $h_2 = 6.5$, $h_3 = 6.1$. \footnote{A quick computational optimisation, similar to that of \cite{Guest_optimisation}, led to similar results to Steffen's Polyhedron. It is possible to roughly double the range of motion for the model, but this comes at the cost of some triangles being nearly degenerate. When taking this degeneration into account, a randomised optimisation found some reasonable polyhedra. One polyhedron found this way, yielding an approximately $25\%$ larger range of motion, comes from setting $l_1 = 4.2,$ $l_2 = 4.3,$ $l_3 = 1,$ $l_4 = 4.8,$ and $l_3 = 3.05$. Then one can set e.g. $h_1 = 7.9, h_2 = 4, h_3 = 6.4$. However, the parameters used above make for a model that is more aesthetic and easier to assemble from paper.}

The polyhedron announced in \cite{Minimal_example} arises in an analogous manner, from cutting and twisting a Bricard type I octahedron along a quadrilateral. In the notation of the paper, we start with a Bricard type I octahedron on the base $p_1p_Tp_3p_B$, where $p_1p_T = p_3p_B$, and $p_Tp_3 = p_Bp_1$. The apices are at $p_0$ and $p_2$, where $p_0$ is the rotation of $p_2$ in the line of symmetry of the quadrilateral $p_1p_Tp_3p_B$, as described by theorem \ref{line_symmetry}. We are given a point $p_5$ on the extension of the segment $p_0p_1$, such that $p_0$ lies between $p_5$ and $p_1$, $p_5p_T = p_1p_T$, and $p_5p_B = p_1p_B$. We next cut and twist along the rotationally symmetric quadrilateral $p_5p_Tp_3p_B$. Since $p_5$ lies on the extension of $p_1p_0$, we need to cut off two triangles that are not there, and so during this cut and twist action we extend the faces $p_Tp_1p_0$ and $p_Bp_1p_0$ to include the two triangles $p_0p_5p_T$ and $p_0p_5p_B$.  When twisting, the vertex originally at $p_0$ gets sent to $p_4$. This reasoning offers an alternative proof of flexibility for their pentagonal bipyramid. See a flexing model at \url{https://www.geogebra.org/m/jhsxhjzx}. Congruent faces are marked with the same colour.

Experimentally, it seems more difficult to find working parameters for the cut and twisted version, and the ones found have not led to a large range of motion. This is why we have focused on the cut and reflection above.

The construction we described in figure \ref{pentagonal_bipyramids} can also be realised in the language of the paper \cite{Minimal_example}. It arises from gluing together a Bricard I octahedron and a Bricard II octahedron. During the gluing, the two polyhedra share the cone $BDA'B'C'$, which is then removed.

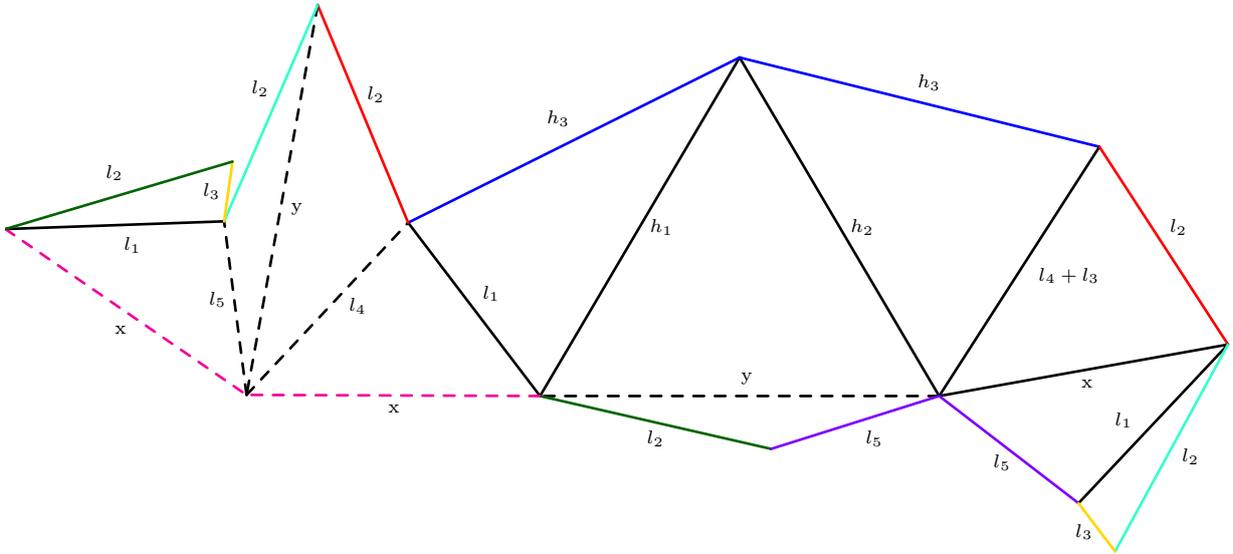
\begin{figure}
    \centering
\definecolor{ffdxqq}{rgb}{1,0.8431372549019608,0}
\definecolor{ttffcc}{rgb}{0.2,1,0.8}
\definecolor{ffqqqq}{rgb}{1,0,0}
\definecolor{fuqqzz}{rgb}{0.9568627450980393,0,0.6}
\definecolor{qqqqff}{rgb}{0,0,1}
\definecolor{xfqqff}{rgb}{0.4980392156862745,0,1}
\definecolor{qqwuqq}{rgb}{0,0.39215686274509803,0}
\begin{tikzpicture}[line cap=round,line join=round,>=triangle 45,scale = \textwidth/20cm]
\clip(-8.8,-3) rectangle (11.5,6.5);
\draw [line width=1pt] (0,0)-- (3.2823391049676753,5.610369863030423);
\draw [line width=1pt] (3.2823391049676753,5.610369863030423)-- (6.564678209935351,0);
\draw [line width=1pt,dash pattern=on 4pt off 4=pt] (6.564678209935351,0)-- (0,0);
\draw [line width=1pt,color=qqwuqq] (0,0)-- (3.8002624351400898,-0.876359186671261);
\draw [line width=1pt,color=xfqqff] (3.8002624351400898,-0.876359186671261)-- (6.564678209935351,0);
\draw [line width=1pt] (0,0)-- (-2.16906359992303,2.873179962948535);
\draw [line width=1pt,color=qqqqff] (-2.16906359992303,2.873179962948535)-- (3.2823391049676753,5.610369863030423);
\draw [line width=1pt,color=qqqqff] (3.2823391049676753,5.610369863030423)-- (9.200198568453429,4.130863401256056);
\draw [line width=1pt] (9.200198568453429,4.130863401256056)-- (6.564678209935351,0);
\draw [line width=1pt,dash pattern=on 4pt off 4pt] (-4.8264533329926875,0.018660772861353703)-- (-2.16906359992303,2.873179962948535);
\draw [line width=1pt,dash pattern=on 4pt off 4pt,color=fuqqzz] (0,0)-- (-4.8264533329926875,0.018660772861353703);
\draw [line width=1pt,color=ffqqqq] (-2.16906359992303,2.873179962948535)-- (-3.656745217007687,6.478287923355321);
\draw [line width=1pt,dash pattern=on 4pt off 4pt] (-3.656745217007687,6.478287923355321)-- (-4.8264533329926875,0.018660772861353703);
\draw [line width=1pt,dash pattern=on 4pt off 4pt] (-4.8264533329926875,0.018660772861353703)-- (-5.196219214951344,2.894990585754746);
\draw [line width=1pt,color=ttffcc] (-5.196219214951344,2.894990585754746)-- (-3.656745217007687,6.478287923355321);
\draw [line width=1pt] (-8.793947305432228,2.7671133517966213)-- (-5.196219214951344,2.894990585754746);
\draw [line width=1pt,dash pattern=on 4pt off 4pt,color=fuqqzz] (-8.793947305432228,2.7671133517966213)-- (-4.8264533329926875,0.018660772861353703);
\draw [line width=1pt,color=ffdxqq] (-5.057699702691361,3.885350290469784)-- (-5.196219214951344,2.894990585754746);
\draw [line width=1pt,color=qqwuqq] (-5.057699702691361,3.885350290469784)-- (-8.793947305432228,2.7671133517966213);
\draw [line width=1pt] (6.564678209935351,0)-- (11.315006588227709,0.8540376445978861);
\draw [line width=1pt,color=ffqqqq] (11.315006588227709,0.8540376445978861)-- (9.200198568453429,4.130863401256056);
\draw [line width=1pt,color=xfqqff] (6.564678209935351,0)-- (8.857042787448917,-1.7762501636202728);
\draw [line width=1pt] (8.857042787448917,-1.7762501636202728)-- (11.315006588227709,0.8540376445978861);
\draw [line width=1pt,color=ttffcc] (11.315006588227709,0.8540376445978861)-- (9.458047006586408,-2.575496013518123);
\draw [line width=1pt,color=ffdxqq] (9.458047006586408,-2.575496013518123)-- (8.857042787448917,-1.7762501636202728);
\begin{scriptsize}
\draw[color=black] (2,2.8) node {$h_1$};
\draw[color=black] (5.3,2.8) node {$h_2$};
\draw[color=black] (3.4,0.3) node {y};
\draw[color=black] (1.9,-0.7) node {$l_2$};
\draw[color=black] (5.5,-0.7) node {$l_5$};
\draw[color=black] (-0.8,1.7) node {$l_1$};
\draw[color=black] (0.3,4.6) node {$h_3$};
\draw[color=black] (6.4,5.2) node {$h_3$};
\draw[color=black] (8.7,2) node {$l_4+l_3$};
\draw[color=black] (-3,1.5) node {$l_4$};
\draw[color=black] (-2.4,-0.2) node {x};
\draw[color=black] (-2.7,5) node {$l_2$};
\draw[color=black] (-4,3.1) node {y};
\draw[color=black] (-5.3,1.6) node {$l_5$};
\draw[color=black] (-4.6,5.1) node {$l_2$};
\draw[color=black] (-6.7,2.5) node {$l_1$};
\draw[color=black] (-6.9,1.1) node {x};
\draw[color=black] (-5.4,3.4) node {$l_3$};
\draw[color=black] (-7,3.7) node {$l_2$};
\draw[color=black] (9,0.2) node {x};
\draw[color=black] (10.5,2.8) node {$l_2$};
\draw[color=black] (7.6,-1.1) node {$l_5$};
\draw[color=black] (9.6,-0.4) node {$l_1$};
\draw[color=black] (10.7,-1) node {$l_2$};
\draw[color=black] (8.95,-2.25) node {$l_3$};
\end{scriptsize}
\end{tikzpicture}
    \caption{A net for a flexible dodecahedron. Valley folds are given by dashed lines and mountain folds by solid lines. The dotted edge marked $y$ can go from a valley fold to a mountain fold during the flexing motion, so should be scored on both sides. Gluing instructions are indicated by the colour of the edges. An online model is available at \url{https://www.geogebra.org/m/pb4nqczx}. }
    \label{net_dodecahedron}
\end{figure}

\section{Minimality}
\label{S3}

\begin{figure}
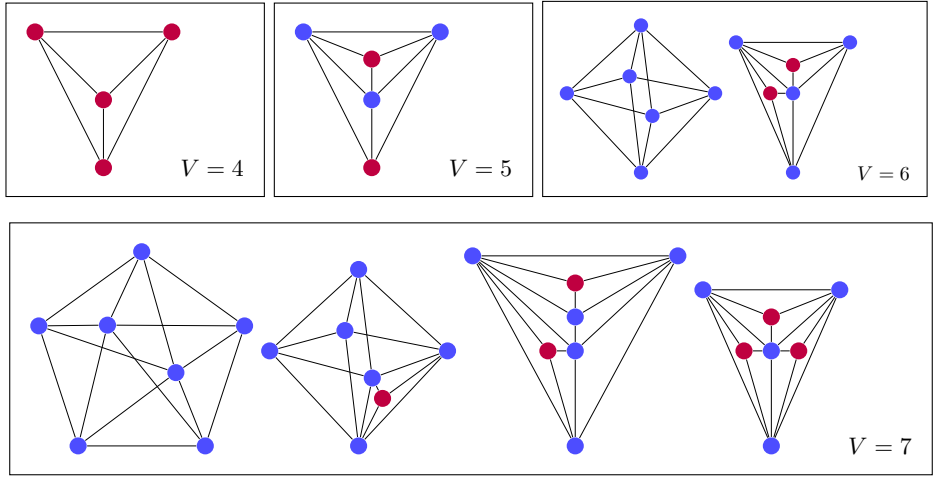

    \centering
   
\tikz\node[draw, inner sep=3mm,scale = 0.9] at (0,0) {
\tikz[circle, minimum size=2.5mm,inner sep=0pt]
\graph [nodes={fill=purple}, empty nodes] { a -- {c[y = -1],d[y = -1]} -- e,c--d,a--e };{$V = 4$}};
\tikz\node[draw, inner sep=3mm,scale = 0.9] at (0,0) {
\tikz[circle, minimum size=2.5mm,inner sep=0pt]
\graph [nodes={fill=blue!70}, empty nodes] { a -- {b[x = 0, y = -0.4, purple],c,d[purple]} -- e, b--c,c--d,a--e };{$V = 5$}};
\tikz\node[draw, inner sep=3mm, scale = 0.75] at (0,0) {
\tikz[circle, minimum size=2.5mm,inner sep=0pt]
\graph [nodes={fill=blue!70}, empty nodes]{a[x = -0.3] --{x1[y = 1.2],x2[x = -0.2, y = 1.3],x3[x=0.2,y = 1.6],x4[y = 1.6]} -- b[x = 0.3],x1--x2--x4--x3--x1};
\tikz[circle, minimum size=2.5mm,inner sep=0pt]
\graph [nodes={fill=blue!70}, empty nodes] { a -- {b[x = 0, y = -0.4, purple],c[y = 0.1],d[y = -0.3]} -- e,c--d,a--e, x1[x = 0.6, y = 2.1,purple]--{a,c,d}, b--c};{$V = 6$}};
\vspace{3mm}

\tikz\node[draw, inner sep=3mm, scale = 0.9] at (0,0) {
\tikz[circle, minimum size=2.5mm,inner sep=0pt]
\graph [nodes={fill=blue!70}, empty nodes] {b[x=0.5, y = 0.5]-- subgraph C_n [n=5, clockwise,radius =45] --a[x = 0.5,y = -0.2]]};
\tikz[circle, minimum size=2.5mm,inner sep=0pt]
\graph [nodes={fill=blue!70}, empty nodes]{a[x = -0.3] --{x1[y = 1.2],x2[x = -0.2, y = 1.3],x3[x=0.2,y = 1.6],x4[y = 1.6]} -- b[x = 0.3],x1--x2--x4--x3--x1, {x4,x3,b} -- y[y = 3.3,x = 1.35,purple]};
\tikz[circle, minimum size=2.5mm,inner sep=0pt]
\graph [nodes={fill=blue!70}, empty nodes] { a[y = 0.5,x = -0.5] -- {b[x = 0, y = -0.4],c[y = 0.1],d[y = -0.3]} -- e[y = 0.5, x= 0.5],c--d,a--e, x1[x = 0.6, y = 2.1,purple]--{a,c,d}, b--c, x3[y = 4.1, x = 1,purple] -- {a,e,b}};
\tikz[circle, minimum size=2.5mm,inner sep=0pt]
\graph [nodes={fill=blue!70}, empty nodes] { a -- {b[x = 0, y = -0.4, purple],c[y = 0.1],d[y = -0.3]} -- e,c--d,a--e, x1[x = 0.6, y = 2.1,purple]--{a,c,d},x2[x = 1.4, y = 3.1,purple]--{e,c,d} , b--c};{$V = 7$}};

\caption{Polyhedral graphs with triangular faces on seven vertices or fewer. Vertices of degree three, coloured red, can be removed without affecting flexibility. Thus, all but three graphs are rigid.}
\label{SmallPlanarGraphs}
\end{figure}

By applying Theorem 5 from \cite{maksimov_minimal_proof}, which says that any polyhedron on fewer than eight vertices is rigid, we deduce the following theorem. Here, simplest is taken to mean the smallest number of vertices. Note that from the identities $3F = 2E$ and $V-E+F = 2$, any triangulated polyhedron on $V$ vertices has $F = 2V-4$ faces, so minimising the number of vertices is the same as minimising the number of faces. 
\begin{theorem}
The simplest flexible polyhedron embedded in $\R^3$ has eight vertices.
\end{theorem}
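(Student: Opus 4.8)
The plan is to prove the two inequalities separately: that a flexible embedded polyhedron on eight vertices exists, and that none exists on seven or fewer. The first is already in hand, since the dodecahedron constructed in Section~\ref{S2} is embedded, flexible, and has exactly eight vertices; this shows the minimum is at most eight. Everything below is devoted to the matching lower bound.

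First I would reduce to a purely combinatorial question about small triangulated polyhedral graphs. Using the remark that $F = 2V-4$ for a triangulated surface, minimizing vertices is equivalent to minimizing faces, and there is no loss in assuming the surface is triangulated: each polygonal face is already rigid and planar, so inserting diagonals neither creates nor destroys flexes. Hence it suffices to show that every triangulated polyhedron on at most seven vertices is rigid once it is embedded.

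The key structural step is the elimination of degree-three vertices. If a vertex $v$ has exactly three neighbours $a,b,c$, then the edge lengths $va,vb,vc$ are fixed, so the position of $v$ is determined by that of $a,b,c$ (up to a discrete reflection), and the three faces at $v$ form a rigid tetrahedral cap over the triangle $abc$. Consequently, in any continuous flex of $P$ the vertex $v$ moves rigidly with $a,b,c$, so the flex restricts to a flex of the smaller surface $P'$ obtained by deleting $v$ and filling in the face $abc$; equivalently, $P$ is rigid whenever $P'$ is. Iterating this deletion removes all degree-three vertices and reduces the problem to surfaces of minimum degree at least four.

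It remains to treat the irreducible graphs, and this is where the main obstacle lies. Since $\sum_v \deg v = 2E = 6V-12$ for a triangulation, minimum degree at least four forces $V \ge 6$, and a short enumeration of the triangulated polyhedral graphs on at most seven vertices (Figure~\ref{SmallPlanarGraphs}) shows that, after the degree-three reductions above, only three graphs survive: the octahedron on six vertices and two graphs on seven. Here embeddedness is essential, since flexibility alone does not fail: the octahedron admits flexible realizations, namely Bricard's octahedra, but by Bricard's classification every such realization self-intersects, so none is embedded. The two remaining seven-vertex graphs require a similar but separate case analysis, and all three cases are packaged in Theorem~5 of \cite{maksimov_minimal_proof}, which asserts that no embedded polyhedron on fewer than eight vertices can flex. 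Combining this lower bound with the explicit eight-vertex example of Section~\ref{S2} yields the theorem.
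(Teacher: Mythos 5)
Your proposal is correct and takes essentially the same route as the paper: the upper bound is the embedded dodecahedron of Section~\ref{S2}, and the lower bound uses the same reduction (triangulation, elimination of degree-three vertices, enumeration leaving the octahedron, the pentagonal bipyramid, and the octahedron with one tent), with the decisive case analysis delegated, exactly as in the paper, to Theorem~5 of \cite{maksimov_minimal_proof}. The only cosmetic difference is that the paper first derives self-intersection of both bipyramids from Connelly's zero-volume theorem for flexible suspensions and then uses Bricard's classification for the tent case, whereas you invoke Bricard's classification directly for the octahedron; both variants rest on the same cited result for the remaining cases.
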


Let us briefly discuss the elements of the proof. For further detail, see \cite{maksimov_minimal_proof}. To prove that eight vertices is minimal, we only need to consider the small number of cases that can arise for fewer vertices. Figure \ref{SmallPlanarGraphs} shows every possible example, this is a listing of planar graphs with triangular faces on seven or fewer vertices. Note that the tetrahedron, $V = 4$, is rigid since three intersecting spheres only meet in two points. Similarly, a tent on a triangular base does not influence the flexibility of a model. Thus, we can remove any such tents and examine the flexibility of the underlying structure. These are the vertices of degree 3, coloured red in the pictures, showing how all but three cases arise as a tetrahedron with some added tents. The three remaining cases are the octahedron (quadrilateral bipyramid), the decahedron (pentagonal bipyramid) and the octahedron with a single tent. In figure \ref{SmallPlanarGraphs}, these are the first graph for $V = 6$ and the first two graphs for $V = 7$. 

A simpler way to see that these are the only three cases to consider is the following. Say the number of vertices of degree $d$ is $V_d$. By removing vertices of degree $3$, we do not affect the flexibility. Also, when we remove a vertex of degree $3$ from a triangulated polyhedron, the result is still triangulated. So we can assume $V_3 = 0$. Therefore, we seek solutions of 

\begin{align*}
V &= V_4 + V_5 + V_6 \text{, for $V \le 7$,}\\ 2E  &= 4V_4+5V_5+6V_6.
\end{align*}

Now, by using the fact that we have a triangulated polyhedron, $3F = 2E$ and $V-E+F = 2$, this gives $$2V_4 + V_5 = 12.$$ Since we also have $V_4+V_5 \le 7$, the only integer solutions are $(5,2)$ and $(6,0)$. By symmetry, we see that these can only correspond to the pentagonal bipyramid and the octahedron, respectively. Then we get the only remaining possibility by adding a single tent to any face of the octahedron.

Our analysis will be complete once we understand every flexible quadrilateral bipyramid and flexible pentagonal bipyramid. We want to show that the self-intersections of the flexible octahedra cannot be avoided by erecting a single tent, and a flexible pentagonal bipyramid must have self-intersection.

By a theorem of R. Connelly from \cite{connelly_rigidity_theorems_details}, a flexible suspension must have volume zero. Therefore, the flexible octahedron and decahedron must self-intersect.  The classification of flexible octahedra by R. Bricard in \cite{Bricard} then finishes the proof. From the symmetry of the constructions, type II always involves two edges intersecting, which cannot be solved by a single tent, and type I always involves two pairs of face intersections. The analysis of type III is more algebraic and carried out in full in \cite{maksimov_minimal_proof}.

\section*{Conclusion}

By generalising the reasoning of \cite{Nelson_pentagons}, we have given an alternative proof of the main theorem in \cite{Minimal_example}. The model we obtain shows a significantly larger range of motion than the previously known minimal example. A single action on a Bricard type I octahedron, with the addition of a tent, results in a flexible polyhedron without self-intersection. This shows that the simplest embedded flexible polyhedron is a dodecahedron on eight vertices, but not the Steffen's polyhedron as previously believed.

Our method of cutting and rotating or cutting and reflecting can be applied to any symmetric quadrilateral lying on a polyhedral surface. As the example shows, this can be a useful way of removing self-intersections of flexible polyhedral surfaces.

\bibliographystyle{alphaurl}
\bibliography{bibliography_flexible_polyhedra}

\end{document}